\DeclareMathAlphabet{\eusm}{OT1}{eusm}{m}{n}
\newtheorem{thm}{Theorem}[section]
\newtheorem{prop}[thm]{Proposition}
\newtheorem{cor}[thm]{Corollary}
\newtheorem{exam}[thm]{Example}
\newtheorem{rem}[thm]{Remark}
\newtheorem{lem}[thm]{Lemma}
\def\vsp{\vspace{1ex}}
\newenvironment{proof}{\par\noindent{\bf Proof \,}}{$\hfill
\Box$\par\bigskip} \textheight 23cm
\begin{document}
\begin{center}
{\rm {\LARGE Isomorphism Problem For Uniserial Modules Over An Arbitrary Ring}}
\end{center}
\vsp

\begin{center}
{\rm {\Large Gabriella D$^{'}$Este$^a$, Fatma Kaynarca$^b$ and Derya Keskin
T\"{u}t\"{u}nc\"{u}$^c$
}}\\
${}^a$ Department of Mathematics, University of Milano,\\
Milano, Italy\\
e-mail: gabriella.deste@unimi.it\\
${}^b$ Department of Mathematics, University of Afyon Kocatepe,\\
03200 Afyonkarahisar, Turkey\\
e-mail: fkaynarca@aku.edu.tr \\
${}^c$ Department of Mathematics, University of Hacettepe,\\
06800 Beytepe, Ankara, Turkey\\
e-mail: keskin@hacettepe.edu.tr\\
\end{center}

\vsp
\begin{center}
{\large {\bf Abstract}} \end{center}

Firstly, we give a partial solution to the isomorphism problem for uniserial modules of finite length with the help of the morphisms between these modules over an arbitrary ring. Later, under suitable assumptions on the lattice of the submodules, we give a method to partially solve the isomorphism problem for uniserial modules over an arbitrary ring. Particular attention is given to the natural class of uniserial modules defined over algebras given by quivers.

\footnote[0]{$2010$ Mathematics Subject Classification{\rm :} Primary 16 D 10 ; Secondary 16 G 20 \\
\hspace{5mm} Key Words{\rm:} Uniserial modules, uniform modules, quivers and representations }

\section{Introduction}
Giving a method for deciding when two uniserial modules over an artin algebra are isomorphic is an open problem which has been asked in \cite[p. 411]{ARS}.
As a partial answer to this question in 1997, Bongartz proves a result, which gives an intrinsic inductive characterization of some algebras having only finitely many uniserial modules up to isomorphism (see \cite{Bongartz}). In 1998, Huisgen-Zimmermann gives a solution to this problem over
finite dimensional algebras over algebraically closed fields in \cite{birge}. In this direction \cite{BZ} and its references have important results based on algebraic geometry. In 2003, Mojiri characterizes isomorphism classes of uniserial modules over a biserial algebra in his thesis (see \cite{Mojiri}). Later, Boldt and Mojiri continue to work on this problem in 2008 (see \cite{B-M}). We should also note that in 2006, it is proven by P\v{r}\'{i}hoda that for two uniserial modules $U$ and $V$ over any ring $R$,
$U\cong V$ if and only if  there is a monomorphism $f: U\to V$ and an epimorphism $g: U\to V$ (\cite[Remark 2.1]{P}). We refer to \cite{BW} for very interesting and recent results
on computer algebra concerning the so-called ``Module Isomorphism Problem"  and many related
isomorphism problems for algebraic structures. Note that there exists an artin algebra having two non-isomorphic
uniserial left modules of length two with the same socle and top. For example, let $K$ be an arbitrary
field. If $A=K\Gamma$, where $\Gamma$ is the Kronecker quiver
$1\stackrel[\beta]{\alpha}{\rightrightarrows}2$, we have
$U_{k}\ncong U_{l}$ for $k\neq l$ for any uniserial module
$U_{k}=A e_{1}/A(\beta-k\alpha)$ for $k\in K$ (see
\cite{ARS} or \cite{birge}).

Inspired and motivated by above question and works, we give our first main result for a partial solution to this question over any ring (not only artin algebras) as Theorem~\ref{res1} and Corollary~\ref{res2}: Let $L$ and $M$ be uniform (for example uniserial) modules of finite length $n$ over an arbitrary ring. The following are
equivalent:
\begin{enumerate}
\item[(i)] $L$ is isomorphic to $M$;
\item[(ii)] There exists two morphisms $f: L\to M$, $g: M\to L$ and a nonzero element $x\in L$ such that $(g\circ f)(x)=x$;
\item[(iii)] There exists $n$ morphisms $f_1, \ldots , f_n$ with $f_1:L\to M$, $f_2: M\to L, \ldots $ such that $f_n\circ \ldots \circ f_1\neq 0$.
\end{enumerate}

Later, we illustrate that we cannot replace (iii) by a similar
condition on $n-1$ maps even if the two uniserial modules are projective-injective (Example~\ref{ex1}).

The second main result of this paper, Theorem~\ref{fixpoint}, is a kind of ``two fixed points theorem": Let $L$ and $M$ be uniserial modules with the following property:\\

(*)~~The lattices of the submodules of $L$ and $M$ are isomorphic to the same\\
 \hspace*{0.5in} sublattice of $\mathbb{Z}\cup\{+\infty, -\infty\}$.\\

Then the following conditions are equivalent:
\begin{enumerate}
  \item [(i)] $L$ is isomorphic to $M$;
  \item [(ii)] There is an endomorphism of $L$ which factors through $M$ and admits at least two fixed points.
\end{enumerate}
We may view this result as a condition on the Hom spaces Hom$(L, M)$ and Hom$(M, L)$. Note that if $S$ and $T$ are simple modules, then there is an isomorphism $S\rightarrow T$ if and only if Hom$(S, T)$ is different from zero. We also show (Proposition~\ref{aleph0}) that cyclic uniserial modules may have few endomorphisms. Finally we use a nice example due to Osofsky \cite{O} to construct a non cyclic uniserial module with countably many non cyclic submodules (Example~\ref{Osofsky}).

Throughout this paper $K$ will be an arbitrary field and modules will be left modules. Moreover, Soc$M$ will be the socle of any module $M$. A module $M$ is said to be \emph{uniserial} if its submodule lattice is a chain. As usual we say that a module $M$ is \emph{uniform} if the intersection of two nonzero submodules of $M$ is different from zero.

Let $x$ be a vertex of a quiver $Q$. Then $S(x)$ will denote the
simple representation corresponding to the vertex $x$. On the
other hand, $P(x)$ (resp. $I(x)$) will denote the indecomposable
projective (resp. injective) representation corresponding to the
vertex $x$. Sometimes, for short, $S(x)$ is replaced by $x$. As in \cite{S},
pictures of the form
$$
\begin{array}{rlc}
 1&&2\\
 &3
\end{array}\quad,
\begin{array}{rlc}
 1&&2\\
 &2
\end{array}\quad,
\begin{array}{rlc}
 1\\
 2
\end{array}\quad,
\begin{array}{rlc}
 2\\
 2
\end{array}\quad,
\begin{array}{rlc}
 1\\
 3
\end{array}\cdots
$$
denote the composition series of indecomposable modules. Our convention for the composition of paths $p, q$ in the path algebra is as in \cite{ARS}, namely $qp$ stands for {\it $q$ after p} whenever the concatenation is defined. For more background on quivers we refer to \cite{ARS} and \cite{S}.

\section{Results}

We start with an easy observation.

\begin{lem} \label{inj}
Let $R$ be any ring and $L$ and $M$  uniform $R$-modules. Let $f:L\rightarrow M$ and $g:M\rightarrow L$ be morphisms such that $(g\circ f)(x)=x$ for some nonzero element $x\in L$. Then $f$ and $g$ are injective.
\end{lem}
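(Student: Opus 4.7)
The plan is to exploit the defining feature of a uniform module: any two nonzero submodules meet nontrivially. Since $(g\circ f)(x)=x\neq 0$, we get for free that $f(x)\neq 0$, so both $Rx\subseteq L$ and $Rf(x)\subseteq M$ are nonzero cyclic submodules that we can test other submodules against.

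First I would show $f$ is injective. Assume for contradiction that $\ker f\neq 0$. Both $\ker f$ and $Rx$ are nonzero submodules of the uniform module $L$, so their intersection is nonzero. Pick a nonzero $y=rx\in\ker f\cap Rx$. On the one hand $f(y)=0$, so $g(f(y))=0$. On the other hand, using $R$-linearity and the hypothesis $(g\circ f)(x)=x$,
\[
(g\circ f)(y)=(g\circ f)(rx)=r(g\circ f)(x)=rx=y,
\]
forcing $y=0$, a contradiction.

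Next I would show $g$ is injective by the symmetric argument on the other side. Assume $\ker g\neq 0$. Since $M$ is uniform and $Rf(x)\neq 0$, there is a nonzero element $z\in\ker g\cap Rf(x)$, say $z=rf(x)$ for some $r\in R$. Then
\[
0=g(z)=g(rf(x))=r(g\circ f)(x)=rx,
\]
so $z=rf(x)=f(rx)=f(0)=0$, contradicting the choice of $z$.

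There is no real obstacle here: both halves are a two-line application of the uniform hypothesis together with the $R$-linearity of $f$ and $g$, which is precisely why the authors advertise this as ``an easy observation.'' The only small point to be careful about is verifying at the start that $f(x)\neq 0$ (otherwise the second half has no nonzero submodule $Rf(x)$ to intersect with $\ker g$), but this is automatic from $(g\circ f)(x)=x\neq 0$.
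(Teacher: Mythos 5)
Your proof is correct and is essentially the paper's own argument: the paper notes that $(g\circ f)(rx)=rx$ for all $r\in R$ forces $Rx\cap\mathrm{Ker}\,f=0$ and $f(Rx)\cap\mathrm{Ker}\,g=0$, and then invokes uniformity of $L$ and $M$ together with $Rx\neq 0$, $f(Rx)\neq 0$ to conclude both kernels vanish. You have merely recast the same two intersections as a proof by contradiction, so there is nothing further to compare.
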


\begin{proof}
We have that $(g\circ f)(rx)=rx$ for every $r\in R$. This implies that
\begin{center}
 $Rx\cap\mbox{Ker}f=0~\mbox{and}~f(Rx)\cap\mbox{Ker}g=0$
\end{center}
Since $Rx$ and $f(Rx)$ are nonzero, then Ker$f=0$ and Ker$g=0$ ($L$ and $M$ are uniform). The lemma is proved.
\end{proof}

Under certain assumptions on $L$ and $M$ (for instance when $L$ and $M$ are injective \cite{B}, and more generally when the Schr\"{o}der-Bernstein problem has a positive solution \cite{AKS}), the hypotheses of Lemma~\ref{inj} imply that $L$ and $M$ are isomorphic.

Now we are giving our first main result:

\begin{thm} \label{res1} Let $L$ and $M$ be uniform modules of finite length $n$ over a ring $R$. The following are equivalent:
\begin{enumerate}
\item[(i)] $L$ is isomorphic to $M$;
\item[(ii)] There exist two morphisms $f: L\to M$, $g: M\to L$ and a nonzero element $x\in L$ such that $(g\circ f)(x)=x$;
\item[(iii)] There exist $n$ morphisms $f_1, \ldots , f_n$ with $f_1:L\to M$, $f_2: M\to L, \ldots $ such that $f_n\circ \ldots \circ f_1\neq 0$.
\end{enumerate}
\end{thm}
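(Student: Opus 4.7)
My plan is to treat the three implications separately. The easy directions come first. Implication (i)$\Rightarrow$(ii) is immediate: if $h : L \to M$ is an isomorphism, take $f = h$ and $g = h^{-1}$, so that every nonzero $x \in L$ is a fixed point of $g \circ f = \mathrm{id}_L$. Implication (i)$\Rightarrow$(iii) is the same idea iterated: set $f_i = h$ for odd $i$ and $f_i = h^{-1}$ for even $i$, so that consecutive pairs telescope and $f_n \circ \cdots \circ f_1$ equals either $h$ or $\mathrm{id}_L$, hence is nonzero. Implication (ii)$\Rightarrow$(i) uses Lemma~\ref{inj} to conclude that $f$ is injective; then $f(L) \subseteq M$ has composition length $n$, the length of $M$, so $f(L) = M$ and $f$ is an isomorphism.

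The heart of the theorem is (iii)$\Rightarrow$(i), and I will actually establish the stronger statement that under (iii) at least one of the $f_j$ is itself an isomorphism; since each $f_j$ has source and target in $\{L, M\}$, this immediately yields $L \cong M$. Write $X_0 = L, X_1 = M, X_2 = L, \ldots$ so that $f_j : X_{j-1} \to X_j$, and set $\phi_j = f_j \circ \cdots \circ f_1$ and $U_j = \phi_j(L) \subseteq X_j$, with $U_0 = L$. The short exact sequence $0 \to U_{j-1} \cap \mathrm{Ker}(f_j) \to U_{j-1} \to U_j \to 0$ gives the length identity
\[
|U_j| \;=\; |U_{j-1}| \;-\; |U_{j-1} \cap \mathrm{Ker}(f_j)|.
\]
Since $\phi_n \neq 0$, every intermediate $\phi_{j-1}$ is nonzero, and hence every $U_{j-1}$ is a nonzero submodule of the uniform module $X_{j-1}$, so $U_{j-1} \supseteq \mathrm{Soc}(X_{j-1})$ (simple, by uniformity).

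Now suppose for contradiction that no $f_j$ is an isomorphism. Each $f_j$ is nonzero (otherwise $\phi_n = 0$), and because $|L| = |M| = n$, a nonzero morphism between them is injective if and only if it is an isomorphism; so under our assumption $\mathrm{Ker}(f_j) \neq 0$ for every $j$. Being a nonzero submodule of the uniform $X_{j-1}$, $\mathrm{Ker}(f_j)$ also contains $\mathrm{Soc}(X_{j-1})$, and therefore $U_{j-1} \cap \mathrm{Ker}(f_j) \supseteq \mathrm{Soc}(X_{j-1})$ has length at least $1$ at every step $j = 1, \ldots, n$. Summing the displayed identity over these $n$ steps gives $|U_n| \leq n - n = 0$, contradicting $\phi_n \neq 0$. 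The sole non-trivial input is the standard fact that a uniform module of finite length has simple socle contained in every nonzero submodule; once this is in hand the rest is a length-budget count, and I do not anticipate any further obstacle.
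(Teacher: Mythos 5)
Your proposal is correct, and its essential content --- the length-drop induction showing that if no $f_j$ is an isomorphism then each $\mathrm{Ker}(f_j)$ is nonzero, hence meets the image nontrivially by uniformity, so the image loses at least one composition factor at each of the $n$ steps and the composite vanishes --- is exactly the paper's argument for (iii)$\Rightarrow$(i). The only cosmetic difference is the logical routing: the paper closes the cycle (i)$\Rightarrow$(ii)$\Rightarrow$(iii)$\Rightarrow$(i), obtaining (ii)$\Rightarrow$(iii) by alternating $f$ and $g$, whereas you prove (ii)$\Rightarrow$(i) directly from Lemma~\ref{inj} plus a length count and handle (i)$\Rightarrow$(iii) separately; both routes are valid.
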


\begin{proof} (i)$\Rightarrow$(ii) This is obvious.

\indent (ii)$\Rightarrow$(iii) Let $f$ and $g$ be as in (ii), and let $h=(g\circ f)^{n/2}$ if $n$ is even, and $h=f\circ (g\circ f)^{\frac{n-1}{2}}$ if $n$ is odd. Then $h\neq 0$ and $h$ is of the form $f_n\circ \ldots \circ f_1$ with $f_j=f$ if $j$ is odd and $f_j=g$ if $j$ is even. Hence (iii) holds.

\indent (iii)$\Rightarrow$(i) Assume that $f_1, \ldots , f_n$ satisfy (iii) and that $L$ is not isomorphic to $M$. Then $f_1(L)$ has length $\leq n-1$. Indeed,
if $f_1(L)$ has length $n$, then we have $f_1(L)=M$. So, $L/{\rm Ker}f_1\cong M$ implies that $L/{\rm Ker}f_1$ has length $n$. This means that Ker$f_1$ has length $0$, namely Ker$f_1=0$. Hence $L\cong M$, a contradiction.
On the other hand, $(f_2\circ f_1)(L)$ has length $\leq n-2$. Indeed, since $M$ is uniform, Ker$f_2\cap f_1(L)$ is nonzero. So, $f_1(L)/({\rm Ker}f_2\cap f_1(L))\cong (f_2 \circ f_1)(L)$ implies that $(f_2 \circ f_1)(L)$ has length $\leq n-2$.
Proceeding by induction, we conclude that $(f_{n-1}\circ \ldots \circ f_2\circ f_1)(L)$ has length $\leq 1$. Since Ker$f_n\neq 0$, we obtain $f_n\circ \ldots \circ f_1=0$, a contradiction to the hypothesis. Hence the result holds.
\end{proof}

Since any uniserial module is uniform, we have the following corollary.

\begin{cor} \label{res2} Let $L$ and $M$ be uniserial modules of finite length $n$ over a ring $R$. The following are equivalent:
\begin{enumerate}
\item[(i)] $L$ is isomorphic to $M$;
\item[(ii)] There exist two morphisms $f: L\to M$, $g: M\to L$ and a nonzero element $x\in L$ such that $(g\circ f)(x)=x$;
\item[(iii)] There exist $n$ morphisms $f_1, \ldots , f_n$ with $f_1:L\to M$, $f_2: M\to L, \ldots $ such that $f_n\circ \ldots \circ f_1\neq 0$.
\end{enumerate}
\end{cor}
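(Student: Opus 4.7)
The plan is a direct specialization of Theorem~\ref{res1}: I would only need to verify that uniserial modules satisfy the hypotheses of that theorem, and then the three-way equivalence follows with no further work.

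The key (and only non-trivial) observation is that a uniserial module is automatically uniform. By definition, the submodule lattice of a uniserial module is a chain, so for any two nonzero submodules $A, B$ of $L$ (or of $M$) one has $A \subseteq B$ or $B \subseteq A$; in either case $A \cap B$ equals the smaller of the two and is therefore nonzero. Hence both $L$ and $M$ are uniform of finite length $n$, which is precisely the standing hypothesis of Theorem~\ref{res1}.

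Once that chain-implies-uniform remark is recorded, the implications (i)$\Rightarrow$(ii), (ii)$\Rightarrow$(iii) and (iii)$\Rightarrow$(i) are quoted directly from Theorem~\ref{res1}; no new argument is needed and no extra step involving the chain structure is required (the theorem itself only used uniformity of $M$ to force $\operatorname{Ker} f_k \cap f_{k-1}\circ\cdots\circ f_1(L)$ to be nonzero, and uniformity of $L$ to kill kernels in Lemma~\ref{inj}).

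Consequently there is no real obstacle to overcome here: the corollary is formally contained in the theorem. I would state it separately only because uniserial modules, as announced in the title and abstract, are the principal objects of interest, and the subsequent results of the paper are more naturally cited in the uniserial form rather than through the slightly more general uniform hypothesis.
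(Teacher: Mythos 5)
Your proposal is correct and matches the paper exactly: the paper derives the corollary with the single remark ``Since any uniserial module is uniform,'' i.e.\ by specializing Theorem~\ref{res1}, which is precisely your argument. Your explicit justification that a chain lattice forces the intersection of two nonzero submodules to be the smaller (hence nonzero) one is the right, if routine, verification of that remark.
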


\begin{rem}\rm
Note that in Theorem~\ref{res1}, while proving $(i)\Rightarrow (ii)\Rightarrow (iii)$, we are not using the ``uniform" condition on $M$ and $L$; and we cannot remove the hypotheses that $L$ and $M$ have the same finite length, or that they both uniform while proving $(iii)\Rightarrow (i)$. Indeed let $A$ be the $K$-algebra given by a quiver with one vertex, say 1, and a loop $a$ around 1 satisfying $a^{2}=0$. With the usual conventions let $f:\begin{array}{rlc}  1\\  1 \end{array}\twoheadrightarrow 1$ and $g:1\hookrightarrow \begin{array}{rlc}  1\\  1 \end{array}$ be the obvious morphisms. Then $g\circ f\neq 0$, but $\begin{array}{rlc}  1\\  1 \end{array}$ and $1$ are uniserial of length $\leq 2$. Now let $A$ be the $K$-algebra given by the quiver
$$\begin{tikzcd}
1 \arrow[out=80,in=0,loop, "a"] \arrow[out=100,in=180,loop, swap, "b"]
\end{tikzcd}$$
with all paths of length two equal to zero. Let $L$ and $M$ be the indecomposable projective or injective modules described by the following pictures in an obvious way.
\begin{center}
\begin{tikzpicture}
    \node at (0,0) (nodeA) {$v_2$};
    \node at (1.5,1.5) (nodeB) {$v_1$};
    \node at (2.8,0) (nodeC) {$v_3$};

    \draw (nodeA) -- (nodeB) -- (nodeC);

    \draw (nodeA) -- (nodeB) node [midway, above, sloped] (EdgeAB) {$a$};
    \draw (nodeB) -- (nodeC) node [midway, above, sloped] (EdgeBC) {$b$};
\end{tikzpicture}
~~~~~~~~~~~~~~
\begin{tikzpicture}
    \node at (1.5,1.5) (nodeB) {$v_4$};
    \node at (2.8,0) (nodeC) {$v_6$};
    \node at (3.8,1.5) (nodeD) {$v_5$};

    \draw (nodeB) -- (nodeC) -- (nodeD);

    \draw (nodeB) -- (nodeC) node [midway, below, sloped] (EdgeBC) {$a$};
    \draw (nodeC) -- (nodeD) node [midway, below, sloped] (EdgeCD) {$b$};
\end{tikzpicture}
\end{center}
Let $f:L\rightarrow M$ be the morphism such that $f(v_{1})=v_{4}$. Then we clearly have $f(v_{2})=v_{6}$. Next let $g:M\rightarrow L$ be the morphism such that $g(v_{4})=v_{2}$ and $g(v_{5})=0$. Then we have $(f\circ g\circ f)(v_{1})=(f\circ g)(v_{4})=f(v_{2})=v_{6}$, and so $f\circ g\circ f\neq 0$.
\end{rem}

The next example shows that we cannot replace (iii) in Corollary \ref{res2} by a similar condition on $n-1$ maps even if the two uniserial modules are projective-injective.

\begin{exam}\label{ex1}\rm There are a finite dimensional $K$-algebra $A$ and two uniserial non-isomorphic projective-injective modules $P$ and $Q$ with the following properties:
\begin{enumerate}
\item[(a)] $P$ and $Q$ have dimensions and lengths equal to $3$;
\item[(b)] ${\rm Hom}_A(P, Q)$ (resp. ${\rm Hom}_A(Q, P)$) is generated by an element $f$ (resp. $g$) such that $g\circ f\neq 0$ (resp. $f\circ g\neq 0$).
\end{enumerate}

\medskip \noindent
{\bf Construction:} Let $A$ be the $K$-algebra given by the quiver
$1 \stackrel [b]{a}{\rightleftarrows} 2$
 with relations $aba=0$ and $bab=0$. Then the projective-injective modules
$P=\begin{array}{rlc}
 1\\
 2\\
 1
\end{array}, \quad Q=\begin{array}{rlc}
 2\\
 1\\
 2
\end{array}$ satisfy (a) and we have $dim{\rm Hom}_A(P, Q)=1= dim{\rm Hom}_A(Q, P)$. Moreover the morphisms $f$ and $g$ in (b) have the property that $(g\circ f)(P)={\rm Soc}P$, $(f\circ g)(Q)={\rm Soc}Q$, $f\circ g\circ f=0$ and $g\circ f\circ g=0$.
\end{exam}

\begin{prop} \label{LM}
Let $L$ and $M$ be uniserial $R$-modules such that the lattice of their submodules is of the following form.
\begin{align}
& \bullet \notag \\
& \vdots \notag \\
& \vdots \notag \\
& \bullet \notag \\
& \mid \notag \\
& \bullet \notag \\
& \mid \notag \\
& \bullet \notag
\end{align}
The following conditions are equivalent:
\begin{enumerate}
  \item [(i)] $L$ is isomorphic to $M$.
  \item [(ii)] There exist a nonzero element $x\in L$ and two morphisms $f:L\rightarrow M$ and $g:M\rightarrow L$ such that $(g\circ f)(x)=x$.
\end{enumerate}
\end{prop}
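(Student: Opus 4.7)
The direction (i)$\Rightarrow$(ii) is immediate: take $f$ any isomorphism $L \to M$, set $g = f^{-1}$, and pick any nonzero $x \in L$.

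For (ii)$\Rightarrow$(i) the plan is to combine Lemma~\ref{inj} with the rigid shape of the common submodule chain displayed in the statement. By Lemma~\ref{inj}, both $f$ and $g$ are injective, so $L$ sits inside $M$ via $f$ and $M$ sits inside $L$ via $g$; the task is to upgrade these mutual embeddings to an isomorphism, and the main tool for this will be the anchor provided by the fixed point.

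Set $N = Rx$. Since $(g\circ f)(rx) = rx$ for every $r \in R$, the map $(g\circ f)|_N$ is the identity on $N$, so $f$ carries $N$ isomorphically onto $f(N) \subseteq M$ and $g$ carries $f(N)$ back onto $N$. This identifies one level of $L$'s chain with one level of $M$'s chain. I would propagate this identification along the whole chain as follows: by uniseriality every submodule of $L$ is comparable with $N$ (and every submodule of $M$ with $f(N)$), so the lattices split into a ``below $N$'' part, on which $f|_N$ is already a lattice isomorphism onto $f(N)$, and an ``above $N$'' part. Above the anchor I would walk up cover by cover, using the discrete covering structure drawn at the bottom of the picture (the three bullets joined by bars): if $N'$ is the cover of $N$ in $L$, then $f(N')$ strictly contains $f(N)$ and sits at the next level of $M$'s chain; injectivity of $f$ and the identical shape of the two lattices force $f(N')$ to coincide with the cover of $f(N)$ in $M$, and similarly for $g$. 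Iterating, and using that the displayed lattice has a top, reaches $f(L) = M$, so $f$ is the required isomorphism.

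The main obstacle is the stretch of the chain indicated only by the vertical dots, where a plain length induction is not available. The hypothesis that the lattices of $L$ and $M$ have exactly the same shape (and not merely that they are abstractly isomorphic as ordered sets) is what makes cover-by-cover matching meaningful, and the fixed point $x$ is what supplies the common reference level $N \leftrightarrow f(N)$ from which the matching propagates in both directions. Without such an anchor, the bare mutual embeddings $f$ and $g$ could in principle combine into a Schr\"oder--Bernstein-style shift rather than an isomorphism, so this anchoring is the heart of the argument.
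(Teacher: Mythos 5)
Your argument is correct in outline, but it takes a much longer road than the paper, and your closing assessment of what makes it work is off. The paper's proof of (ii)$\Rightarrow$(i) is essentially one line: by Lemma~\ref{inj} the map $f$ is injective, so $f(L)\cong L$ is not of finite length; but the displayed lattice (read with $0$ at the bottom and $L$ at the top) says precisely that every \emph{proper} submodule of $M$ has finite length, so $f(L)=M$ and $f$ is an isomorphism. Your cover-by-cover matching does reach the same conclusion, but note two things. First, the iteration never ``reaches'' the top: $L$ and $M$ cover nothing and are the suprema of their chains of proper (finite-length) submodules, so after matching the $k$-th cover of $Rx$ with the $k$-th cover of $f(Rx)$ for every finite $k$ you still must finish with a union argument, $f(L)=f\bigl(\bigcup_k N_k\bigr)=\bigcup_k f(N_k)=M$; the phrase ``using that the displayed lattice has a top'' gestures at this but is not an argument as written. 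That is easily repaired. Second, and more substantively, your final remark that the fixed point is ``the heart of the argument'' because it blocks a Schr\"oder--Bernstein-style shift is wrong for this lattice shape: a shift would force $f(L)$ to be a \emph{proper} submodule of $M$, which is impossible on length grounds alone once $f$ is injective. The fixed point enters only through Lemma~\ref{inj}, to secure injectivity; after that the finite/infinite length dichotomy does all the work, and the anchored propagation is unnecessary scaffolding.
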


\begin{proof}
$(i)\Rightarrow(ii)$ This is obvious.\\
$(ii)\Rightarrow(i)$ We first deduce from Lemma~\ref{inj} that $f$ is injective. Consequently $f(L)$ is a submodule of $M$ which is not of finite length. It follows that $f$ is surjective. Hence $(i)$ holds.
\end{proof}
\begin{exam}
\rm There are uniserial modules $L$ and $M$ and morphisms $f:L\rightarrow M$ and $g:M\rightarrow L$ such that $L\ncong M$, but $(g\circ f)^{n}\neq 0$ for any $n\geq 1$.
\end{exam}

\medskip\noindent
{\bf Construction:} Let $A$ be the $K$-algebra given by the quiver $1 \stackrel [b]{a}{\rightleftarrows} 2$, and let $L$ and $M$ the modules
$$I(1)=\begin{array}{c}
    \vdots \\
    2 \\
    1 \\
    2 \\
    1
  \end{array}~~~~\mbox{and}~~~~I(2)=\begin{array}{c}
    \vdots \\
    1 \\
    2 \\
    1 \\
    2
  \end{array}~~~~\mbox{respectively}.
$$
Next let $f:I(1)\rightarrow I(2)$ and $g:I(2)\rightarrow I(1)$ be morphisms with simple kernel. Then $(g\circ f)^{n}$ is surjective for any $n\geq 1$, but $I(1)\ncong I(2)$.

\bigskip
As observed in the introduction, the sequence of the composition factors of a uniserial module $U$ of finite length does not determine $U$ up to isomorphism. More generally, a similar result holds for the largest subquotients of a uniserial module of finite length.

\begin{exam} \label{subq} \rm
The subquotients of length $n-1\geq 2$ of a uniserial module $U$ of finite length $n$ do not determine $U$ up to isomorphism.
\end{exam}

\medskip \noindent
{\bf Construction:} Let $A$ be the $K$-algebra given by the Euclidean diagram $\tilde{A_{n}}$ with the following orientation.
$$
\begin{tikzcd}
  1 \ar[r,"a_1"]
    \ar[rrrrr,bend right=50,looseness=0.3,"b"']
& 2 \ar[r,"a_2"]
& 3 \ar[r,"a_3"]
& \cdots
  \ar[r,"a_{n-2}"] 
& n-1 \ar[r,"a_{n-1}"]
& n
\end{tikzcd}
$$
Let $L=P(1)/(a_{n-1}\cdots a_{2}a_{1}-b)$ and let $M=P(1)/(b)$. Then $L$ and $M$ are uniserial $A$-modules of length $n$. Since $bL\neq 0$ and $bM=0$, we have $L\ncong M$. On the other hand $L$ and $M$ have a maximal submodule isomorhic to $P(2)$ and a maximal factor module isomorhic to $I(n-1)$.

\begin{rem} \rm
We know from \cite[Introduction]{FS} and \cite[Proposition 2]{Facchini} that the subquotients of finite length of a uniserial module behave quite differently in the commutative and in the noncommutative case. Hence it is natural to wonder if the situation described in Example~\ref{subq} happens also in the commutative case. To see that this happens also in the commutative case, let $A$ be the $K$-algebra given by the quiver
$$\begin{tikzcd}
1 \arrow[out=80,in=0,loop, "a"] \arrow[out=100,in=180,loop, swap, "b"]
\end{tikzcd}$$
with relations $a^{n}=0, b^{2}=0, ab=0$ and $ba=0$. Then the following pictures
$$\begin{tikzcd}
  v_{1} \ar[r,"a"]
    \ar[rrr,bend right=50,looseness=0.3,"b"']
& \cdots \ar[r,"a"]
& v_{n-1} \ar[r,"a"]
& v_{n}
\end{tikzcd}~~~~~~~~
\begin{tikzcd}
  w_{1} \ar[r,"a"]
  &\cdots \ar[r,"a"]
  & w_{n-1} \ar[r,"a"]
  & w_{n}
\end{tikzcd}
$$
describe two non isomorphic uniserial modules of length $n$, say $V$ and $W$ (with bases $v_{1}, v_{2},\ldots, v_{n}$ and $w_{1}, w_{2},\ldots, w_{n}$ respectively) such that the subquotients of $V$ and $W$ of length $n-1$ are of the following form.
$$\begin{tikzcd}
  \bullet \ar[r, "a"]
& \bullet
\cdots \ar[r, "a"]
& \bullet \ar[r,"a"]
& \bullet
\end{tikzcd}$$
\end{rem}

As we shall see, also the dual of Proposition~\ref{LM} holds.
\begin{prop} \label{chain}
Let $L$ and $M$ be uniserial modules such that the lattice of their submodules is of the following form.
\begin{align}
& \bullet \notag \\
&  \mid \notag \\
& \bullet \notag \\
&  \mid \notag \\
& \bullet \notag \\
& \vdots \notag \\
& \vdots \notag \\
& \vdots \notag \\
& \bullet \notag
\end{align}
The following conditions are equivalent:
\begin{enumerate}
  \item [(i)] $L$ is isomorphic to $M$.
  \item [(ii)] There exist a nonzero element $x\in L$ and two morphisms $f:L\rightarrow M$ and $g:M\rightarrow L$ such that $(g\circ f)(x)=x$.
\end{enumerate}
\end{prop}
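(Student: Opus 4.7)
The plan is to dualize the proof of Proposition~\ref{LM} to the present lattice shape. The direction $(i)\Rightarrow(ii)$ is immediate, so the real work is $(ii)\Rightarrow(i)$. By Lemma~\ref{inj}, both $f$ and $g$ are injective. I label the nonzero submodules of $L$ by a descending chain $L=L_{0}\supsetneq L_{-1}\supsetneq L_{-2}\supsetneq\cdots$ whose intersection is $0$, and likewise the nonzero submodules of $M$ by $M=M_{0}\supsetneq M_{-1}\supsetneq M_{-2}\supsetneq\cdots$.

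Since $L$ is uniserial, the cyclic submodule $Rx$ coincides with some $L_{-k}$. Because $f$ is injective and $f(L)$ is a nonzero submodule of $M$, it equals $M_{-\ell}$ for a unique $\ell\geq 0$, and similarly $g(M)=L_{-m}$ for some $m\geq 0$. The goal is to show $m=\ell=0$, for then $f(L)=M$ and $f$ is an isomorphism.

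The next step is a transport-of-lattice argument. The isomorphism $M\to L_{-m}$ induced by $g$ preserves submodule lattices; both of these are chains consisting of a top element together with a strictly descending sequence of nonzero submodules whose intersection is $0$. Any order isomorphism between two such chains must take top to top and is then forced on every cover, so it sends $M_{-j}$ to $L_{-(m+j)}$ for every $j\geq 0$. Hence $g(f(L))=g(M_{-\ell})=L_{-(m+\ell)}$, and $g\circ f$ is an isomorphism $L\to L_{-(m+\ell)}$. Applying the same transport argument to $g\circ f$ yields $(g\circ f)(L_{-k})=L_{-(k+m+\ell)}$.

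Finally, from $(g\circ f)(x)=x$ one obtains $(g\circ f)(Rx)=Rx$, i.e.\ $L_{-(k+m+\ell)}=L_{-k}$. Since the submodules $L_{-n}$ are pairwise distinct, this forces $m+\ell=0$, hence $m=\ell=0$, which completes the proof. I expect the most delicate point to be the lattice-transport observation, since it requires noting that the chain of nonzero submodules has no nontrivial order automorphisms; once this is available, everything else is a short index comparison.
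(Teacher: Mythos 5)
Your proof is correct and follows essentially the same route as the paper: injectivity of $f$ and $g$ from Lemma~\ref{inj}, indexing the descending chain of nonzero submodules, and using the fixed cyclic submodule $Rx$ to force the index shift of $f$ to be zero. The only difference is cosmetic: where you prove exact equalities $f(L_{-n})=M_{-(\ell+n)}$ via the rigidity of the chain, the paper gets by with the inclusions $f(L_{n})\subseteq M_{j+n}$ and $g(M_{n})\subseteq L_{n}$ obtained by counting strict inclusions, concluding from $L_{i}\subseteq L_{j+i}$ that $j=0$.
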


\begin{proof}
$(i)\Rightarrow(ii)$ This is obvious.\\
$(ii)\Rightarrow(i)$ Assume $L=L_{0}\supset L_{1}\supset L_{2}\supset \cdots$ and $M=M_{0}\supset M_{1}\supset M_{2}\supset \cdots$ are the sequences of all nonzero submodules of $L$ and $M$.
Then there exist $i$ and $j$ such that
\begin{itemize}
        \item [(1)] $Rx=L_{i}$ and $f(L)=M_{j}$.\\

Since Lemma~\ref{inj} guarantees that $f$ and $g$ are injective, we have
        \item [(2)] $f(L_{n})\subseteq M_{j+n}$ and $g(M_{n})\subseteq L_{n}$ for any $n$.\\

It follows that
        \item [(3)] $L_{i}=(g\circ f)(L_{i})=g(f(L_{i}))\subseteq g(M_{j+i})\subseteq L_{j+i}$.
\end{itemize}
Consequently we have $j=0$, and so $f:L\rightarrow M$ is an isomorphism.
\end{proof}
We can now give the second main result of our paper.

\begin{thm} \label{fixpoint}
Let $L$ and $M$ be uniserial modules such that the lattice of their submodules is isomorphic to a sublattice of~~$\mathbb{Z}\cup\{+\infty, -\infty\}$ with the usual order. Then $L$ is isomorphic to $M$ if and only if there is an endomorphism $h$ of $L$ with the following properties:
\begin{itemize}
  \item [(a)] $h$ factors through $M$.
  \item [(b)] $h(x)=x$ for some nonzero $x\in L$.
\end{itemize}
\end{thm}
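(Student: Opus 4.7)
The direction $(i)\Rightarrow(ii)$ is immediate: take $h=\mathrm{id}_L$, which factors through $M$ via any isomorphism $L\to M$ and its inverse and fixes every element of $L$. For $(ii)\Rightarrow(i)$, factor the endomorphism as $h=g\circ f$ with $f\colon L\to M$ and $g\colon M\to L$; the hypothesis $h(x)=x$ with $x\neq 0$ together with Lemma~\ref{inj} guarantees that both $f$ and $g$ are injective. The plan is to show that $f$ is in fact surjective, which will give the desired isomorphism.

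Up to order-isomorphism, a sublattice of $\mathbb{Z}\cup\{+\infty,-\infty\}$ having both a minimum (the zero submodule) and a maximum (the whole module) falls into one of four shapes: (a) finite; (b) $\mathbb{N}\cup\{+\infty\}$; (c) $\{-\infty\}\cup(-\mathbb{N})$; (d) $\{-\infty\}\cup\mathbb{Z}\cup\{+\infty\}$. Shape (a) is the hypothesis of Corollary~\ref{res2}, shape (b) matches the diagram of Proposition~\ref{LM}, and shape (c) matches that of Proposition~\ref{chain}. In each of these three cases, condition (ii) of the present theorem is exactly the condition appearing in the cited result, and that result already delivers $L\cong M$. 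The only case not covered by the earlier results, and the main obstacle, is (d).

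To handle (d), index the submodules of $L$ and $M$ by $\Lambda\cong\{-\infty\}\cup\mathbb{Z}\cup\{+\infty\}$ so that $L_\lambda\subseteq L_\mu$ iff $\lambda\leq\mu$ (similarly for $M$) and define $\phi\colon\Lambda\to\Lambda$ by $f(L_\lambda)=M_{\phi(\lambda)}$. Since $f$ is injective, it restricts to an isomorphism $L_\mu\to f(L_\mu)$ sending the (unique) maximal submodule $L_\lambda$ of $L_\mu$ to the maximal submodule of $f(L_\mu)$; hence $\phi$ is strictly order-preserving and maps immediate successors to immediate successors. From $f(0)=0$ one gets $\phi(-\infty)=-\infty$, and strict monotonicity together with successor preservation force $\phi(\mathbb{Z})\subseteq\mathbb{Z}$ with $\phi(n+1)=\phi(n)+1$; therefore $\phi|_{\mathbb{Z}}$ is a shift $n\mapsto n+c$ for some $c\in\mathbb{Z}$. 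Finally $\phi(+\infty)$ must exceed every integer, which leaves no possibility in $\Lambda$ but $\phi(+\infty)=+\infty$. Hence $f(L)=M$, and combined with injectivity, $f$ is the sought isomorphism. Notice that in case (d) the fixed-point hypothesis enters only indirectly, through Lemma~\ref{inj}, whereas in case (c) it is essential to exclude a genuine downward shift of $\phi$, exactly as in the proof of Proposition~\ref{chain}.
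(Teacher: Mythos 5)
Your proof is correct, and its skeleton is the same as the paper's: both reduce the statement to a four-way case analysis on the order type of the submodule lattice, dispatch the finite case to Theorem~\ref{res1}/Corollary~\ref{res2}, the type $\mathbb{N}\cup\{+\infty\}$ case to Proposition~\ref{LM}, the dual type to Proposition~\ref{chain}, and then treat the doubly infinite lattice $\{-\infty\}\cup\mathbb{Z}\cup\{+\infty\}$ directly, using Lemma~\ref{inj} only to get injectivity of $f$. The one genuine difference is how that last case is finished. You introduce the induced map $\phi$ on the index set, show it carries covers to covers because $f$ restricts to an isomorphism onto each $f(L_\mu)$, deduce that $\phi|_{\mathbb{Z}}$ is a translation, and conclude $\phi(+\infty)=+\infty$. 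The paper instead makes a single observation: $f(L)\cong L$ has no maximal submodule, while every proper nonzero submodule $M_n$ ($n\in\mathbb{Z}$) of $M$ has the maximal submodule $M_{n-1}$; since $f(L)\neq 0$, this forces $f(L)=M$ at once. Your shift computation is sound (and the claim $\phi(\mathbb{Z})\subseteq\mathbb{Z}$ is in fact immediate from injectivity, since $0\neq f(L_n)\subsetneq f(L)\subseteq M$, without appealing to successor preservation), but it establishes more than is needed; the maximal-submodule observation buys the same conclusion in one line. Your closing remark that the fixed point is essential only in the downward-infinite case, to rule out a proper shift as in Proposition~\ref{chain}, is exactly the right reading of where the hypothesis does real work.
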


\begin{proof}
If the lattice of the submodules of $L$ and $M$ is isomorphic to either a finite interval of $\mathbb{N}$, or to $\mathbb{N}\cup\{+\infty\}$, or to $(\mathbb{Z}\setminus \mathbb{N})\cup \{-\infty\}$, then the claim follows from Theorem~\ref{res1} and Propositions \ref{LM} and \ref{chain}. Assume now that this lattice is $\mathbb{Z}\cup \{+\infty, -\infty\}$. Lef $f:L\rightarrow M$ and $g:M\rightarrow L$ be morphisms such that $h=g\circ f$. Then Lemma~\ref{inj} implies that $f$ and $g$ are injective. Hence we have $0\neq f(L)\subseteq M$ and $f(L)$ does not have a maximal submodule. Consequently $f(L)=M$, and so $f$ is an isomorphism.
\end{proof}

To see that there are uniserial non cyclic modules with few endomorphisms and many submodules, it is enough to consider the following example.\\

\begin{exam}
\rm There is a uniserial module $W$ with the following properties:
\begin{itemize}
  \item [(a)] $W$ is not cyclic and $\mbox{End}~W\cong K$;
  \item [(b)] The lattice of submodules of $W$ is isomorphic to $\mathbb{Z}\cup \{+\infty, -\infty\}$
\end{itemize}
\end{exam}
\medskip \noindent
{\bf Construction:} Let $A$ be the $K$-algebra given by a quiver with one vertex and countably many loops $\alpha_{i}$ with $i\in \mathbb{Z}$. Next let $W$ be the $A$-module described by the following picture.
$$
\begin{tikzcd}
& \cdots \ar[r, "\alpha_{-2}"]
& v_{-1} \ar[r, "\alpha_{-1}"]
& v_{0} \ar[r, "\alpha_{0}"]
& v_{1} \ar[r, "\alpha_{1}"]
& v_{2} \ar[r, "\alpha_{2}"]
& \cdots
\end{tikzcd}
$$
Then $W$ satisfies (a) and (b).\\

The next example shows that the composition factors of a uniserial module (over a non commutative ring) may be vector spaces of different dimension.

\begin{exam} \rm
Let $C$ and $D$ be fields with $C\subseteq D$. Then there is a $C$-algebra $R$ with the following properties:
\begin{enumerate}
  \item [(i)] $R$ is a left artinian ring.
  \item [(ii)] $R$ is an artin $C$-algebra if and only if $[D:C]$ is finite.
  \item [(iii)] The left ideals of $R$ which are simple $R$-modules are all isomorphic, projective and parametrized by the projective line over $D$.
  \item [(iv)] $R$ admits a uniserial module $P$ of length two, such that
  $$dim_{C}~\mbox{Soc}P=[D:C]~~~~\mbox{and}~~~~dim_{C}~P/\mbox{Soc}P=1$$
\end{enumerate}
\medskip \noindent
{\bf Construction:} Let $R$ be the $C$-algebra
$\left(\begin{array}{cc}
                D & D \\
                0 & C \\
              \end{array}
            \right)$.
Then $(i)$ follows from the fact that any proper nonzero left ideal of $R$ belongs to the following list:
$$~~~~~~~~~~~~~~~~~~~~~~~~~~~~~~~~~~~~I_{k}=R\left(\begin{array}{cc}
                  1 & k \\
                  0 & 0 \\\end{array}\right),~~~~~~~~~~~~~~~~~~~~~~
                  I_{\infty}=R\left(\begin{array}{cc} 0 & 1 \\
                                                      0 & 0 \\
                                    \end{array}\right)=\left(\begin{array}{cc}  0 & D \\
                                                                                0 & 0 \\
                                                             \end{array}\right)=J(R),$$
                 $$I_{0}\oplus I_{\infty}=\left(\begin{array}{cc} D & D \\
                                           0 & 0 \\
                        \end{array}\right),~~~~~~~~~~~~~~
                  R\left(\begin{array}{cc}  0 & 0 \\
                                            0 & 1 \\
                         \end{array}\right)=\left(\begin{array}{cc} 0 & D \\
                                                                    0 & C \\
                                                  \end{array}\right)$$
Then $I_{\lambda}$ for $\lambda\in D\cup \{\infty\}$ is a simple projective $R$-module. Indeed $I_{0}$ is a summand of $_{R}R$ and the map $I_{0}\rightarrow I_{k}$ (respectively $I_{0}\rightarrow I_{\infty}$) such that
$$\left(\begin{array}{cc}
                  x & 0 \\
                  0 & 0 \\\end{array}\right)\mapsto \left(\begin{array}{cc} x & kx \\
                                                                            0 & 0 \\
                                                          \end{array}\right)~~(\mbox{respectively}~~
\left(\begin{array}{cc}
                  x & 0 \\
                  0 & 0 \\\end{array}\right)\mapsto \left(\begin{array}{cc} 0 & x \\
                                                                            0 & 0 \\
                                                          \end{array}\right))$$
is an isomorphism of $R$-modules. Hence $(iii)$ holds. Since $dim_{C}R$ is finite if and only if $[D:C]$ is finite, also $(ii)$ holds. Finally let
$P=\left(\begin{array}{cc} 0 & D \\
                           0 & C \\
         \end{array}\right)$. Then we have Soc$P=\left(\begin{array}{cc}  0 & D \\ 0 & 0 \\ \end{array}\right)$. Consequently $dim_{C}$~Soc$P=[D:C]$ and $dim_{C}~P/$Soc$P=1$, as claimed in $(iv)$.
\end{exam}

As we shall see, uniserial modules over a $K$-algebra may have a small endomorphism ring.\\

\begin{prop}\label{aleph0}
There exist $K$-algebras $A$ and uniserial $A$-modules $U$ such that one of the following conditions hold:
\begin{itemize}
  \item [(i)] ${\rm End}_{A}~U\cong K$~~~~and~~~~$1\leq \mbox{dim}_{K}~U\leq \aleph_{0}$;
  \item [(ii)] ${\rm End}_{A}~U\cong K[x]/(x^{2})$~~~~and~~~~$2\leq \mbox{dim}_{K}~U\leq \aleph_{0}$.
\end{itemize}
\end{prop}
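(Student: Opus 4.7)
The proposition is an existence statement, so the plan is to exhibit explicit $K$-algebras $A$ together with uniserial $A$-modules $U$ realizing each of (i) and (ii), spanning both ends of the stated dimension ranges.

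For (i), the case $\dim_K U = 1$ is immediate from $A = K$, $U = K$. The case $\dim_K U = \aleph_0$ is already supplied by the example just preceding the proposition: the module $W$ over the quiver algebra with one vertex and a $\mathbb{Z}$-indexed family of loops $\alpha_i$ is uniserial and has $\mathrm{End}(W) \cong K$ by the commutation argument displayed there. Intermediate finite dimensions $n \geq 2$ can be obtained by the natural truncation --- one vertex, loops $\alpha_1, \ldots, \alpha_{n-1}$ with no relations, module basis $v_1, \ldots, v_n$ and action $\alpha_i v_j = \delta_{ij} v_{j+1}$ --- and the same commutation argument again forces every endomorphism to be a scalar, while the submodules are visibly the spans of the tails $\{v_k, v_{k+1}, \ldots, v_n\}$.

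For (ii), the case $\dim_K U = 2$ is handled by taking $A = K[x]/(x^2)$ with $U = A$ as a left module over itself, since then $\mathrm{End}_A(A) \cong A^{\mathrm{op}} \cong K[x]/(x^2)$. Finite dimensions $\geq 3$ can be obtained from Example~\ref{ex1} (length $3$) and, more generally, from the following cycle-quiver construction: let $A = KQ/(c^2)$, where $Q$ is the $m$-cycle $1 \to 2 \to \cdots \to m \to 1$ and $c$ is the full cycle starting at vertex $1$. Then $P(1) = Ae_1$ is uniserial of length $2m$, its basis consisting of the $2m$ paths starting at $1$ that survive the relation, and $\mathrm{End}(P(1)) \cong e_1 A e_1 \cong K[c]/(c^2) \cong K[x]/(x^2)$.

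I expect the main obstacle to be the infinite-dimensional case of (ii). Over the cycle-quiver algebras above, imposing $c^2 = 0$ truncates $P(1)$ to finite length, so a direct generalization does not yield an infinite-dimensional uniserial $U$ with $\mathrm{End}(U) \cong K[x]/(x^2)$. To reach that case one has to work harder --- for instance by building $U$ as a representation of a suitably infinite quiver where each vertex carries a common nilpotent endomorphism whose centralizer is exactly $K[x]/(x^2)$, with enough rigidity to force the full submodule lattice to be a chain. The routine verifications --- uniseriality by enumerating the cyclic submodules and the endomorphism ring via the identification $\mathrm{End}(Ae_1) \cong e_1 A e_1$ --- are straightforward once the underlying modules have been specified.
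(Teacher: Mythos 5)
There are two genuine gaps. First, your truncated construction for the finite\-/dimensional instances of (i) does not work: with one vertex, loops $\alpha_1,\dots,\alpha_{n-1}$ and action $\alpha_i v_j=\delta_{ij}v_{j+1}$, the top basis vector $v_n$ is annihilated by \emph{every} loop, so the $K$-linear map $\phi$ with $\phi(v_1)=v_n$ and $\phi(v_j)=0$ for $j\geq 2$ is a nonzero $A$-endomorphism with $\phi^2=0$ (check: $\phi(\alpha_i v_j)=0=\alpha_i\phi(v_j)$ for all $i,j$, since $\alpha_i v_n=0$). Hence ${\rm End}_A U\cong K[x]/(x^2)$, not $K$, for every $n\geq 2$. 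The commutation argument that works in the $\mathbb{N}$- or $\mathbb{Z}$-indexed infinite case fails here precisely because the finite chain has a last vector killed by all generators. The paper avoids this by switching to the linearly oriented $A_n$ quiver $1\rightarrow 2\rightarrow\cdots\rightarrow n$ and taking $U=I(j)$: the composition factors are pairwise non-isomorphic simples, which forces every nonzero endomorphism of the uniserial module to be an isomorphism, so ${\rm End}=K$.

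Second, you explicitly leave the infinite-dimensional instance of (ii) unresolved, and that is the one point of the proposition requiring a real idea. The paper takes $U=V$ to be Osofsky's cyclic artinian module of infinite length: $V$ has basis $v_0,v_1,v_2,\dots$ and $A\subseteq {\rm End}_K V$ is generated by $f_i\colon v_0\mapsto v_i$ ($i\geq 1$) and $f_0\colon v_{i+1}\mapsto v_i$; then $V=Av_0$ is uniserial, ${\rm Ker}\,f_0=\langle v_0,v_1\rangle$ makes $f_1$ an $A$-endomorphism with $f_1^2=0$, and ${\rm End}_A V=K[f_1]\cong K[x]/(x^2)$. (This module reappears in Example~\ref{Osofsky}.) A smaller issue: your cycle-quiver algebras $KQ/(c^2)$ only produce even lengths $2m$, so odd dimensions $\geq 5$ are not reached; the paper instead uses the module with composition series $1,2,\dots,m,1$ over the $m$-cycle (go around once plus one step), which realizes every finite dimension $\geq 2$ with endomorphism ring $K[x]/(x^2)$. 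Your examples for $\dim_K U=1$, $\dim_K U=2$, and the $\aleph_0$ case of (i) are fine.
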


\begin{proof}
\rm (i) For any $n\geq 1$ let $A$ be the $K$-algebra given by the Dynkin diagram
$$
\begin{tikzcd}
  1 \ar[r]
& 2 \ar[r]
&  \ar[r]
\cdots \ar[r] & n
\end{tikzcd}
.$$
Then for any $j=1, 2, \ldots n$, the uniserial injective module $I(j)$ has dimension $j$ and endomorphism ring $K$.
Next let $A$ be the $K$-algebra given by a quiver with one vertex and countably many loops $\alpha_{1}, \alpha_{2}, \alpha_{3}, \cdots$. Finally let $U$ be the uniserial $A$-module described by the following picture.
$$
\begin{tikzcd}
v_{1} \ar[r,"\alpha_1"]
& v_{2} \ar[r,"\alpha_2"]
& v_{3} \ar[r,"\alpha_3"]
& \cdots\cdots
\end{tikzcd}
$$
Then we have $U=Av_{1}$ and $\alpha_{i}v_{1}=0$ for any $i>1$. Consequently $h(v_{1})\in <v_{1}>$ for any $h\in~\mbox{End}_{A}~U$. Hence $\mbox{End}_{A}~U\cong K$ and so $(i)$ holds.\\
\rm (ii) Let $A$ be the $K$-algebra given by one of the following quivers.
$$
\begin{tikzcd}
1 \arrow[out=75,in=0,loop]
\end{tikzcd},~~~~~~ \begin{tikzcd}1 \arrow[r,shift left]  & 2 \arrow[l,shift left] \end{tikzcd},~~~~~~
\begin{tikzcd} 1 \ar[r]  & 2 \ar[r] & 3 \ar[ll,bend left]\end{tikzcd},\cdots
$$
Then the uniserial modules $\begin{array}{rlc}
 1\\
 1
\end{array}, \begin{array}{rlc}
 1\\
 2\\
 1
\end{array}, \begin{array}{rlc}
 1\\
 2\\
 3\\1
\end{array}, \cdots$ have dimension $2, 3, 4,\cdots$ and endomorphism ring $K[x]/(x^{2})$. To end the proof, let $V$ be the cyclic artinian $A$-module constructed by Osofsky in \cite{O}. Then $V$ is a $K$-vector space with basis $v_{0}, v_{1}, v_{2}, \cdots$ and $A$ is the subalgebra of $\mbox{End}_{K}~V$ generated by the linear maps $f_{0}, f_{1}, f_{2}, \cdots $ described by the following picture.
$$
\begin{tikzcd}
&\vdots\arrow{d}{f_{0}} \\ & v_{2} \arrow{d}{f_{0}} \\
v_{0} \arrow{ur}{\begin{array}{rlc} \vdots\\f_{2}\end{array}} \arrow{r}{f_{1}} & v_{1}
\end{tikzcd}
$$
Thus we have $V=Av_{0}$ and Ker$f_{0}=<v_{0}, v_{1}>$. Consequently $f_{1}\in \mbox{End}_{A}V$. Since $f^{2}_{1}=0$, we obtain $\mbox{End}_{A}V=K[f_{1}]\simeq K[x]/(x^{2})$. Hence also $(ii)$ holds.
\end{proof}

We see that by ``glueing together" non isomorphic copies of the Osofsky's module in \cite{O}, we obtain a more complicated module in the following example.

\begin{exam}\label{Osofsky}
There are a $K$-algebra $A$ and a uniserial $A$-module $U$ with the following properties:
\begin{itemize}
  \item [(a)] $U$ is the union of cyclic submodules $L_{n}$ such that $L_{1}\subset L_{2}\subset L_{3}\subset \cdots$;
  \item [(b)] $U$ is the union of non cyclic submodules $M_{n}$ such that $M_{1}\subset M_{2}\subset M_{3}\subset \cdots$.
\end{itemize}
\end{exam}
\medskip \noindent
{\bf Construction:} Let $U$ be a $K$-vector space with a basis of the form
\begin{align}
v_{11}, v_{12},\ldots, v_{1\infty}, v_{21}, v_{22},\ldots, v_{2\infty}, v_{31}, \dots \label{v}
\end{align}
Let $A$ be the subalgebra of $\mbox{End}_{K}U$ generated by the linear maps $f_{mn}, g_{m}, h_{m}$ described by the following picture.
\begin{center}
\begin{tikzcd}
 & \vdots \arrow[d, "g_{3}"] &  & \vdots  \arrow[d, "g_2"] &  &\vdots \arrow[d, "g_1"] \\
 & v_{32} \arrow[d, "g_{3}"] &  & v_{22} \arrow[d, "g_2"] &  & v_{12} \arrow[d, "g_1"] \\
 \cdots \arrow[ru, "\vdots"] \arrow[r, "f_{31}"'] & v_{31} \arrow[r, "h_2"'] & v_{2\infty} \arrow[r, "f_{21}"'] \arrow[ru, "\overset{\vdots}{f_{22}}"] & v_{21}
 \arrow[r, "h_1"'] & v_{1\infty} \arrow[ru, "\overset{\vdots}f_{12}"] \arrow[r, "f_{11}"'] & v_{11}
\end{tikzcd}
\end{center}
With this notation, let
$$L_{n}=Av_{n\infty}~~\mbox{and}~~M_{n}=\bigcup_{i\neq\infty}Av_{ni}~~\mbox{for any}~~n.$$
Then the $L_{n}$'s satisfy $(a)$ and the $M_{n}$'s satisfy $(b)$. Moreover we clearly have
\begin{align}
Av_{11}\subset Av_{12}\subset\ldots  M_{1}\subset L_{1}\subset Av_{21}\subset Av_{22}\subset\ldots M_{2}\subset L_{2}\subset Av_{31}\subset Av_{32}\subset \ldots \label{R}
\end{align}
Let $v=av_{ij}+w$ where $0\neq a\in K$ and $w$ is a linear combination of vectors on the left of $v_{ij}$ in (\ref{v}). If either $j=1$ or $j=\infty$, then any vector on the left of $v_{ij}$ in (\ref{v}) belongs to $Av$. Hence we have $Av=Av_{ij}$. Assume now $1<j<\infty$. Then we have $g^{j-1}_{i}(v), g^{j-2}_{i}(v), \ldots, g_{i}(v)\in Av$. This implies that $v_{i1}, v_{i2}, \ldots, v_{i(j-1)}\in Av$. Hence we have $Av=Av_{ij}$. Consequently any proper submodule of $U$ appears in (\ref{R}), and so $U$ is uniserial, as claimed.\\


\begin{thebibliography}{1}

\bibitem{ARS} M. Auslander, I. Reiten and S. O. Smal$\phi$, \textit{Representation Theory of Artin Algebras}, Cambridge
University Press, UK, 1997.

\bibitem{B-M} A. Boldt and A. Mojiri, \textit{On uniserial modules in the Auslander-Reiten quiver}, J. Algebra \textbf{319} (2008) 1825-1850.

\bibitem{Bongartz} K. Bongartz, \textit{A note on algebras of finite uniserial type}, J. Algebra \textbf{188} (1997) 513-515.

\bibitem{BZ} K. Bongartz and B. Huisgen-Zimmermann, \textit{Varieties of Uniserial Representations IV. Kinship To Geometric Quotients}, Trans. Amer. Math. Soc. \textbf{353}(5) (2001) 2091-2113.

\bibitem{BW} P. A. Brooksbank and J. B. Wilson, \textit{The module isomorphism problem revisited}, J. Algebra \textbf{421} (2015)  541-559.

\bibitem{B} R. Bumby, \textit{Modules which are isomorphic to submodules of each other}, Arch. der. Math \textbf{16} (1965) 184-185.

\bibitem{Facchini} A. Facchini, \textit{Lattice of submodules and isomorphism of subquotients}, Abelian Groups and Modules, Proceedings of the Udine Conference, Udine, CISM Courses and Lectures \textbf{287}, Springer-Verlag (1984) 491-501.

\bibitem{FS} A. Facchini and L. Salce, \textit{Uniserial modules: sums and isomorphisms of subquotients}, Comm. Algebra \textbf{18} (2) (1990) 499 - 517.

\bibitem{AKS} P. A. Guil Asensio, B. Kalebo\v{g}az, A. K. Srivastava, \textit{The Schr\"{o}der - Bernstein problem for modules}, J. Algebra \textbf{498}
(2018) 153-164 .

\bibitem{birge} B. Huisgen-Zimmermann,  \textit{The geometry of uniserial represantations of finite dimensional algebras I}, J. Pure Appl. Alg. \textbf{127} (1998) 39-72.

\bibitem{Mojiri} A. Mojiri, \textit{Geometric Aspects Of Finite Dimensional Algebras-Uniserial Representations}, M.Sc. Thesis, University of Ottawa, Canada, July 2003.

\bibitem{O} B. Osofsky, \textit{An example of a cyclic artinian module of infinite length}, unpublished (communicated by D. Eisenbud to C. Menini).

\bibitem{P} P. P\v{r}\'{i}hoda, \textit{On uniserial modules that are not quasi-small}, J. Algebra \textbf{299} (2006) 329-343.

\bibitem{S} R. Schiffler, \textit{Quiver Representations}, Springer International Publishing, Switzerland, 2014.
\end{thebibliography}
\end{document}